\title{Decomposable approximations of nuclear $C^*$-algebras}
\author{Ilan Hirshberg}
\address{Department of Mathematics, Ben Gurion University of the Negev, P.O.B. 653, Be'er Sheva 84105, Israel}
\email{ilan@math.bgu.ac.il}
\thanks{This research was supported by Israel Science Foundation grant 1471/07 and UK Engineering and Physical Sciences Research Council grant EP/I019227/1}
\author{Eberhard Kirchberg}
\address{Institut f{\"u}r Mathematik, Humboldt-Universit{\"a}t zu Berlin, 
Unter den Linden 6, D-10099 Berlin, Germany}
\email{kirchbrg@mathematik.hu-berlin.de}
\author{Stuart White}
\address{School of Mathematics and Statistics, University of Glasgow, Glasgow, G12 8QW, Scotland}
\email{stuart.white@glasgow.ac.uk}
\theoremstyle{plain}
\newtheorem{Thm}{Theorem}[section]
\newtheorem{Lemma}[Thm]{Lemma}
\theoremstyle{definition}
\newtheorem{Def}[Thm]{Definition}
\newtheorem{Rmk}[Thm]{Remark}
\newtheorem{Question}[Thm]{Question}
\newcommand{\B}{\mathcal{B}}
\newcommand{\A}{\mathcal{A}}
\newcommand{\M}{\mathcal{M}}
\newcommand{\K}{\mathcal{K}}
\newcommand{\Nh}{\mathcal{N}}
\newcommand{\Oh}{\mathcal{O}}
\newcommand{\N}{{\mathbb N}}
\newcommand{\lb}{\left <}
\newcommand{\rb}{\right >}
\newcommand{\eps}{\varepsilon}
\numberwithin{equation}{section}
\begin{document}
\begin{abstract}
We show that nuclear $C^*$-algebras have a refined version of the completely positive approximation property, in which the maps that approximately factorize through finite dimensional algebras are convex combinations of order zero maps. We use this to show that a separable nuclear $C^*$-algebra $\A$ which is closely contained in a $C^*$-algebra $\B$ embeds into $\B$.
\end{abstract}
\maketitle

The decomposition rank and nuclear dimension of a $C^*$-algebra are noncommutative notions  of covering dimension which play a prominent role in the structure and classification theory of $C^*$-algebras (see, e.g. \cite{et,winter-dr-Z-stability,winter-nd}). These dimensions are defined in terms of uniformly decomposable completely positive approximations of nuclear $C^*$-algebras.  The main theorem of this paper places these definitions in a broader context by providing a sharpening of the completely positive approximation property: nuclear $C^*$-algebras always have decomposable completely positive approximations.

We apply our approximation theorem to resolve a problem in perturbation theory of nuclear $C^*$-algebras.  Given two $C^*$-algebras $\A$ and $\B$ concretely represented on the same Hilbert space, say that $\A\subseteq_\gamma\B$ if operators in the unit ball of $\A$ can be approximated in $\B$ up to $\gamma$. In \cite{christensen}, Christensen showed that a sufficiently close near containment $\M\subseteq_\gamma\mathcal N$ of von Neumann algebras with $\M$ injective implies that there is an embedding $\M\hookrightarrow \mathcal N$. In the $C^*$-context, \cite[Theorem 6.10]{cssww1} (see also \cite{cssww2}) produces embeddings from sufficiently close near containments $\A\subseteq_\gamma\B$ when $\A$ is separable and has finite nuclear dimension, however the estimates in this result depend on the nuclear dimension of $\A$. The approximation theorem enables us to remove the hypothesis of finite nuclear dimension and establish a $C^*$-version of Christensen's embedding theorem with a universal constant valid for all separable nuclear $C^*$-algebras.  This is the subject of Section \ref{near inclusions}.

Recall that a $C^*$-algebra $\A$ is nuclear if and only if it has the completely positive approximation property (\cite{Choi-Effros-CPAP,kirchberg-CPAP}), that is, if for any finite subset $F \subseteq \A$ and for any $\eps>0$ there is a finite dimensional $C^*$-algebra $A_0$ and completely positive contractions
$$\xymatrix{
\A \ar[r]^{\psi} & A_0 \ar[r]^{\varphi} & \A
}$$ 
such that $\|\varphi \circ \psi (a) - a\|<\eps$ for all $a \in F$. We call the triple $(A_0,\psi,\varphi)$ a \emph{CP approximation} for $(F,\eps)$. The decomposition rank was introduced in \cite{kirchberg-winter} using order zero maps and decomposable maps. A completely positive contraction $\varphi: \A \to \B$ is said to be an \emph{order zero map} if $\varphi$ preserves orthogonality, i.e. whenever self-adjoint elements $x,y \in \A$ satisfy $xy = 0$ then we also have $\varphi(x)\varphi(y) = 0$. A completely positive map $\varphi:\A \to \B$ is said to be \emph{$n$-decomposable} if $\A$ decomposes into a direct sum $\A = \A_0 \oplus \A_1 \oplus \cdots \oplus \A_n$ such that $\varphi|_{\A_k}$ is an order zero map for all $k$. If $\varphi$ is $n$-decomposable for some $n$, we will say that $\varphi$ is \emph{decomposable}. A nuclear $C^*$-algebra $\A$ is said to have \emph{decomposition rank} $n$ if $n$ is the smallest number such that for every finite subset $F\subseteq\A$ and $\eps>0$, there exist CP approximation $(A_0,\psi,\varphi)$ where $\varphi$ is $n$-decomposable. The definition of nuclear dimension in \cite{winter-zacharias} uses a slightly weaker condition on approximating triples, asking that we can approximate $(F,\eps)$ by a triple $(A_0,\psi,\varphi)$ with $A_0$ finite dimensional, $\psi$ a completely positive contraction and $\varphi$ an $n$-decomposable map (but not necessarily contractive). This notion reaches purely infinite $C^*$-algebras, whereas $C^*$-algebras with finite decomposition rank are quasidiagonal and therefore stably finite (\cite{kirchberg-winter}).  

In Theorem \ref{ConvexCom}, we refine the completely positive approximation theorem and show that if $\A$ is nuclear, then one can always chose CP approximations such that the maps $\varphi$ going back into $\A$ are decomposable and contractive, only without an upper bound on the number of summands involved in the decomposition. In fact, the theorem is stronger - the map $\varphi$ can be chosen to be a convex combination of order zero maps. Thus, one can always choose decomposable CP approximations, and the definitions of decomposition rank and nuclear dimension require placing a uniform bound on the number of summands present.

In Section \ref{sec2}, we present an approximation theorem for weakly nuclear completely positive contractions $\Phi:\B\rightarrow\M$ from separable exact $C^*$-algebras into properly infinite von Neumann algebras. Such maps can be approximated in the point-weak$^*$ topology by a convex combination of $^*$-homomorphisms $\B\rightarrow\M$. In fact we can find a fixed $^*$-homomorphism $\varphi:\B\rightarrow\M$ so that $\Phi$ can be approximated by convex combinations of two unitary conjugates of $\varphi$.  As a consequence we obtain another proof of a special case of Theorem \ref{ConvexCom}, under the additional assumption that $\A^{**}$ is properly infinite (e.g. if $\A$ is stable or simple and purely infinite).

Part of the work on this paper was done while the authors visited the CRM in Barcelona and we thank the CRM for its hospitality. We thank Erik Christensen and the referee for their remarks on an earlier version of the paper. I.H. also wishes to thank Nate Brown 
for some helpful conversations regarding this paper.

\section{Approximation via convex combinations of order zero maps}

In this section we establish our strengthening of the completely positive approximation property for nuclear $C^*$-algebras. The first step is to establish a Kaplansky density lemma for order zero maps. Recall from \cite[Remark 2.4]{kirchberg-winter} that order zero maps are \emph{projective} in the sense that whenever $\mathcal J\lhd\mathcal D$ is an ideal and $\varphi:F\rightarrow\mathcal D/\mathcal J$ is an order zero map from a finite dimensional $C^*$-algebra, there exists an order zero map $\tilde{\varphi}:F\rightarrow \mathcal D$ lifting $\varphi$, i.e. $q\circ\tilde{\varphi}=\varphi$, where $q:\mathcal D\rightarrow \mathcal D / \mathcal J$ is the quotient map.   As noted in \cite{kirchberg-winter}, the projectivity of order zero maps arises from the correspondence between order zero maps $F\rightarrow\A$ and $^*$-homomorphisms from the cone on $F$ into $\A$ and Loring's projectivity of the cones on finite dimensional $C^*$-algebras (see \cite{loring}).

\begin{Lemma}\label{lemma:weak-stability}
Let $\A \subset \B(H)$ be a $C^*$-algebra, and let $\M$ be the strong$^*$-closure of $\A$. Let $F$ be a finite dimensional $C^*$-algebra and $\varphi:F\rightarrow \M$ be an order zero map. Then there exists a net $(\varphi_\lambda)$ of order zero maps $F\rightarrow \A$ such that $\varphi_\lambda(x)\rightarrow \varphi(x)$ in strong$^*$-topology for all $x\in F$.  In particular $\varphi_\lambda(x)\rightarrow\varphi(x)$ in weak$^*$-topology for all $x\in F$.
\end{Lemma}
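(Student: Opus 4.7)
\emph{Plan.} The plan is to reformulate the problem via the cone $CF = C_0((0,1])\otimes F$ and then apply Loring's projectivity of $CF$ together with a sequence--algebra implementation of Kaplansky density.

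First, invoke the bijection between order zero maps $F\to\M$ and $*$-homomorphisms $\Phi:CF\to\M$ recalled in the paragraph before the lemma, via $\Phi(t\otimes x)=\varphi(x)$. The lemma becomes the statement that any such $\Phi$ can be approximated in the point--strong$^*$ topology by $*$-homomorphisms $\Phi_\lambda:CF\to\A$, from which the required $\varphi_\lambda(x)=\Phi_\lambda(t\otimes x)$ follow. It is essential that the cone datum be lifted \emph{as a whole}: the alternative of using the structure theorem $\varphi(x)=h\pi(x)$ and approximating $\pi$ by $*$-homomorphisms into $\A$ fails in general, since (e.g.\ for $\A=C([0,1])\subset L^\infty([0,1])=\M$) projections of $\M$ need not be strong$^*$-limits of projections of $\A$, while the composite order zero map $\varphi$ still can be approximated. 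Projectivity of $CF$ is precisely what enables the joint lift.

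Fix a free ultrafilter $\omega$ on $\N$ and form the Ando--Haagerup $\sigma$-strong$^*$ ultrapower $\M^\omega=\ell^\infty(\N,\M)/\mathcal{N}_\omega$, where $\mathcal{N}_\omega$ is the two-sided norm-closed ideal of bounded sequences tending to $0$ in $\sigma$-strong$^*$ along $\omega$; the diagonal inclusion $\M\hookrightarrow\M^\omega$ is a $*$-homomorphism. Since $\sigma$-strong$^*$ agrees with strong$^*$ on bounded sets, Kaplansky density with norm control embeds $\M$ into the $C^*$-subalgebra $\A^\omega:=\ell^\infty(\N,\A)/(\mathcal{N}_\omega\cap\ell^\infty(\N,\A))$ by sending $m\in\M$ to $[(a_n)]$ for any bounded strong$^*$-approximating sequence $(a_n)\subset\A$. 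The composite $\Phi:CF\to\M\hookrightarrow\A^\omega$ is then a $*$-homomorphism into a quotient of $\ell^\infty(\N,\A)$, and Loring's projectivity of $CF$ lifts it to a $*$-homomorphism $\tilde\Phi:CF\to\ell^\infty(\N,\A)$. Its coordinates $\Phi_n:CF\to\A$ are $*$-homomorphisms satisfying $\Phi_n(c)\to\Phi(c)$ in strong$^*$ along $\omega$ for every $c\in CF$, and extracting an appropriate $n$ for each pair $(G,\eps)$ with $G\subset H$ finite and $\eps>0$ produces the required net of order zero maps.

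The principal obstacle is the careful construction of the ideal $\mathcal{N}_\omega$. The naive candidate of bounded strong$^*$-null sequences is only a left ideal, because multiplication is not jointly strong$^*$-continuous (a bounded $(z_n)$ and a strong$^*$-null $(x_n)$ need not yield a strong$^*$-null $(x_n z_n)$). Passing to the $\sigma$-strong$^*$ topology and invoking the Ando--Haagerup construction provides a genuine two-sided norm-closed ideal whose quotient is a $C^*$-algebra, whereupon Loring's projectivity applies; the remaining steps are routine.
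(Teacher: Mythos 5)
Your overall strategy is the same as the paper's: realize $\M$ as a quotient of a $C^*$-algebra of bounded $\A$-valued families by the strong$^*$-null ones, then lift the order zero map (equivalently, the $^*$-homomorphism on the cone $CF$) through this quotient using Loring's projectivity. However, the quotient you construct does not exist as stated, and this is a genuine gap. The set $\mathcal{N}_\omega$ of bounded sequences tending to $0$ $\sigma$-strong$^*$ along $\omega$ is \emph{not} a two-sided ideal of $\ell^\infty(\N,\M)$, nor is its intersection with $\ell^\infty(\N,\A)$ an ideal of $\ell^\infty(\N,\A)$: taking $\A=\M=\B(\ell^2)$ with matrix units $e_{ij}$, the sequence $(e_{nn})_n$ is $\sigma$-strong$^*$-null and $(e_{1n})_n$ is bounded, yet $(e_{1n}e_{nn})_n=(e_{1n})_n$ is not $\sigma$-strong$^*$-null because $e_{1n}e_{1n}^*=e_{11}$ for every $n$. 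Passing from strong$^*$ to $\sigma$-strong$^*$ changes nothing, since the two topologies agree on bounded sets; the Ocneanu/Ando--Haagerup ultrapower is defined as a quotient of the \emph{normalizer} of this ideal, not of all of $\ell^\infty(\N,\M)$, precisely because of this failure. Consequently your $\A^\omega$ is not a $C^*$-algebra and projectivity cannot be invoked. The repair is exactly the paper's device: restrict to the subalgebra $\mathcal{D}$ of bounded families that actually \emph{converge} strong$^*$ to an element of $\M$. Since multiplication is jointly strong$^*$-continuous on bounded sets, $\mathcal{D}$ is a $C^*$-algebra, the null families form a genuine two-sided closed ideal $\mathcal{J}$ of $\mathcal{D}$, and $\mathcal{D}/\mathcal{J}\cong\M$ by Kaplansky density; your embedding of $\M$ lands in the image of this subalgebra anyway, so the rest of your argument goes through unchanged.

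A second gap is the use of sequences indexed by $\N$. Kaplansky density yields nets, and when $H$ is nonseparable an element of $\M$ need not be a strong$^*$-limit of a \emph{sequence} from $\A$ (no sequence of compact operators converges strongly to $1$ on a nonseparable $H$, for instance). This is not a cosmetic point: Lemma \ref{L2} applies the present lemma inside $\A^{**}$ in its universal representation, whose unit ball is not strong$^*$-metrizable even for separable $\A$. The index set must therefore be the directed set of strong$^*$-neighborhoods of $0$, as in the paper, rather than $\N$ with a free ultrafilter. Your side remark --- that one must lift the cone datum as a whole rather than approximate the supporting $^*$-homomorphism in the structure theorem $\varphi(\cdot)=h\pi(\cdot)$ --- is correct and is indeed the reason projectivity is used.
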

\begin{proof}
Let $\Lambda$ denote the directed set of all strong$^*$-neighborhoods of $0\in H$ ordered by containment, i.e. for $V,W\in\Lambda$, write $V\geq W$ when $V\subseteq W$.  Let $\mathcal D$ consist of all bounded nets $(x_V)_{V\in\Lambda}$ in $\A$ indexed by $\Lambda$ which converge in the strong$^*$-topology to some $x\in\M$ (that is, such that for any strong$^*$-neighborhood $W$ of $0$ there exists $W_0 \geq W$ such that for any $V \geq W_0$ we have that $x_V - x \in W$).  
  Since multiplication is jointly strong$^*$-continuous on bounded sets, $\mathcal D$ forms a $^*$-subalgebra of $\ell^\infty(\Lambda,\A)$.  Given a sequence $(x^{(n)})_{n=1}^\infty$ in $\mathcal D$ converging (in norm) to $x\in\ell^\infty(\Lambda,\A)$, write $x^{(n)}_\infty$ for the strong$^*$-limit of $(x^{(n)}_V)_{V\in\Lambda}$. It is easy to check that $(x^{(n)}_\infty)_{n=1}^\infty$ is \emph{norm}-Cauchy, and therefore converges to $x_\infty$ in $\M$, say. By writing $x_V-x_\infty=(x_{V}-x^{(n)}_V)+(x^{(n)}_V-x^{(n)}_\infty)+(x^{(n)}_\infty-x_\infty)$, we see that $x_V$ converges in strong$^*$-topology to $x_\infty$ and so $\mathcal D$ is a $C^*$-subalgebra of $\ell^\infty(\Lambda,\A)$.

Now let $\mathcal J$ be the ideal in $\mathcal D$ consisting of strong$^*$-null nets and note that the argument above shows that $\mathcal J$ is norm closed in $\mathcal D$.  The quotient map $\mathcal D\rightarrow \mathcal D/\mathcal J$ is given by evaluating the strong$^*$-limit of a net in $\mathcal D$. By Kaplansky's density theorem every element of $\M$ is the strong$^*$-limit of a net indexed by $\Lambda$ and so $\mathcal D/\mathcal J$ is canonically $^*$-isomorphic to $\M$.

 Given a finite-dimensional $C^*$-algebra $F$ and an order zero map $\varphi:F\rightarrow\M$, projectivity enables us to lift $\varphi$ to an order zero map $\tilde{\varphi}:F\rightarrow\mathcal D$. Writing $\tilde{\varphi}$ as a net $(\varphi_\mathcal U)_{\mathcal U\in\Lambda}$ of contractions $F\rightarrow \A$, each of the maps $\varphi_\mathcal U$ is order zero and the lifting ensures that $\varphi_\mathcal U(x)\rightarrow\varphi(x)$ in strong$^*$-topology for all $x\in F$. 
\end{proof}

Our next lemma uses the fact that AF algebras are locally reflexive to show that hyperfinite von-Neumann algebras $\M$ (i.e. those containing a weak$^*$-dense AF $C^*$-subalgebra) have a stronger version of semi-discreteness: the maps coming back into $\M$ can be taken to be $^*$-homomorphisms.  If one defined a `semi-discreteness dimension' analogously to the nuclear dimension, then, as expected, all injective von Neumann algebras would be zero dimensional.
 Recall that a $C^*$-algebra $\B$ is said to be locally reflexive if for any finite dimensional operator system $E \subset \B^{**}$ there is a net of CP contractions $\psi_{\nu} : E \to \B$ such that $\psi_{\nu}(x) \to x$ weak$^*$ for any $x \in E$ (see \cite[Chapter 9]{Brown-Ozawa}). We recall that any nuclear $C^*$-algebra is locally reflexive, and in particular any AF algebra is locally reflexive. 
\begin{Lemma}\label{lemma:afd-weak-nuc}
Let $\M$ be a hyperfinite von-Neumann algebra. Then there exists a net of finite dimensional subalgebras $A_\nu$ and CP contractions $\Phi_\nu:\M \to A_\nu$ such that $\Phi_\nu(a) \to a$ in the weak$^*$ topology for any $a \in \M$.
\end{Lemma}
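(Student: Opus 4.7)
The plan is to exploit the dense AF subalgebra to construct expectations onto its finite-dimensional building blocks and then use local reflexivity of $\A$ to transfer these approximations from $\A$ to all of $\M$.

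First I would set up expectations on the AF side. Write $\A=\overline{\bigcup_{n} A_{n}}$ with each $A_{n}$ finite-dimensional. Since finite-dimensional $C^{*}$-algebras are injective, Arveson's extension theorem yields CP contractions $E_{n}:\A\to A_{n}$ extending $\mathrm{id}_{A_{n}}$. A standard triangle-inequality argument -- using that each $E_{n}$ is contractive and fixes $\bigcup_{m\le n}A_{m}$ pointwise -- then shows that $E_{n}(a)\to a$ in norm for every $a\in\A$.

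Next I would invoke local reflexivity of $\A$, which holds because AF algebras are nuclear. The inclusion $\A\hookrightarrow\M$ extends to a normal surjection $\pi:\A^{**}\to\M$ whose kernel is $(1-z)\A^{**}$ for some central projection $z\in\A^{**}$, so $\pi$ restricts to a normal $^{*}$-isomorphism $z\A^{**}\cong\M$. Under this identification, finite-dimensional operator systems in $\M$ become finite-dimensional operator systems in $\A^{**}$, and each $g\in\M_{*}$ pulls back to $g\circ\pi\in(\A^{**})_{*}=\A^{*}$ restricting to $g$ on $\M$. Local reflexivity then gives, for any finite $F\subseteq\M$, any finite $G\subseteq\M_{*}$ and any $\eps>0$, a CP contraction $\sigma:\mathrm{span}(F\cup F^{*}\cup\{1_{\M}\})\to\A$ with $|g(\sigma(a)-a)|<\eps$ for all $a\in F$ and $g\in G$.

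To assemble the net, I would index by triples $\nu=(F,G,\eps)$ with the obvious partial order. Given $\nu$, produce $\sigma$ as above, then choose $n$ large enough that $\|E_{n}(\sigma(a))-\sigma(a)\|<\eps$ for every $a\in F$; set $A_{\nu}:=A_{n}$ and apply Arveson's theorem once more (again by injectivity of $A_{n}$) to extend $E_{n}\circ\sigma$ to a CP contraction $\Phi_{\nu}:\M\to A_{\nu}$. The triangle inequality
\[
|g(\Phi_{\nu}(a)-a)|\le|g(E_{n}\sigma(a)-\sigma(a))|+|g(\sigma(a)-a)|<(\|g\|+1)\eps
\]
then delivers point-weak$^{*}$ convergence $\Phi_{\nu}(a)\to a$ for each $a\in\M$. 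The main obstacle is the bookkeeping behind the identification $\M\cong z\A^{**}$, so that local reflexivity of $\A$ can be applied to finite sets in $\M$ against $\M_{*}$-functionals; once this interface is in place, combining the two approximations is a direct triangle inequality.
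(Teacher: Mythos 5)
Your argument is correct and follows essentially the same route as the paper: identify $\M$ with a corner $z\A^{**}$ of the bidual of the weak$^*$-dense AF subalgebra, apply local reflexivity to a finite-dimensional operator system to land approximately in $\A$, push into a finite-dimensional subalgebra (your expectations $E_n$ play the role of the paper's conditional expectation onto an $\eps$-containing finite-dimensional subalgebra), and extend to all of $\M$ by Arveson's theorem. The only differences are organizational, not mathematical.
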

\begin{proof}
Given $x_1,\cdots,x_n\in \M$ and a weak$^*$-neighborhood $V$ of $0$, let $E$ be the finite dimensional operator system in $\M$ spanned by the $x_i$. It suffices to produce a finite dimensional subalgebra $A\subset \M$ and a completely positive contraction $\Phi:\M\rightarrow A$ with $\Phi(x_i)-x_i\in V$ for $i=1,\cdots,n$, as then we can produce a net of such maps indexed by finite subsets of $\M$ and a weak$^*$-neighborhood basis of $0$. Let $\B$ be a weak$^*$-dense AF subalgebra of $\M$ and regard $\M$ as a von Neumann subalgebra of $\B^{**}$. By local reflexivity there exists a completely positive contraction $\varphi:E\rightarrow B$ with $\varphi(x_i)-x_i\in V$ for all $i$. By choosing a finite dimensional subalgebra $A\subset \B$ which $\eps$-contains the image under $\varphi$ of the unit ball of $E$ for a sufficiently small $\eps$ and composing $\varphi$ by a conditional expectation from $\B$ onto $A$, we may additionally assume that $\varphi$ takes values in $A$.  Arveson's extension theorem then enables us to extend this map to the required map $\Phi$. 
\end{proof}

The previous lemmas together with Connes' theorem from \cite{Connes.InjectiveFactors} that injectivity implies hyperfiniteness will now be used to derive an approximation property for nuclear $C^*$-algebras in the weak topology in which the maps going into the algebra are order zero.
\begin{Lemma}\label{L2}
If $\A$ is a separable nuclear $C^*$-algebra, then there is a net of CP contractions 
$$\xymatrix{
\A \ar[r]^{\psi_\nu} 
& A_\nu \ar[r]^{\varphi_\nu} & \A
}$$ 
 with $A_{\nu}$ finite dimensional such that for all $a \in \A$, $\varphi_{\nu} \circ \psi_{\nu}(a) \to a$ weakly and $\varphi_{\nu}$ are order zero maps. 
\end{Lemma}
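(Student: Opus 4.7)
The plan is to combine Lemma \ref{lemma:weak-stability} and Lemma \ref{lemma:afd-weak-nuc} via Connes' theorem. Represent $\A$ in its universal representation on a Hilbert space $H$, so that $\M := \A''$ is canonically identified with $\A^{**}$ and the strong$^*$-closure of $\A$ in $B(H)$ equals $\M$. Since $\A$ is nuclear, $\M$ is injective, and Connes' theorem provides that $\M$ is hyperfinite, putting us in the setting of Lemma \ref{lemma:afd-weak-nuc}.

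First I would apply Lemma \ref{lemma:afd-weak-nuc} to $\M$ to obtain a net of CP contractions $\Phi_\nu : \M \to A_\nu$ with each $A_\nu \subset \M$ finite dimensional and $\Phi_\nu(x) \to x$ weak$^*$ for every $x \in \M$. Setting $\psi_\nu := \Phi_\nu|_\A : \A \to A_\nu$ gives the required forward map. The inclusion $\iota_\nu : A_\nu \hookrightarrow \M$ is a $^*$-homomorphism, so it is an order zero map; however its image need not lie in $\A$, and so we cannot take $\varphi_\nu := \iota_\nu$. Still, $\iota_\nu \circ \psi_\nu(a) = \Phi_\nu(a) \to a$ weak$^*$ for every $a \in \A$.

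The key step is to invoke Lemma \ref{lemma:weak-stability} to replace each $\iota_\nu$ by an order zero map into $\A$. For each $\nu$ the lemma yields a net of order zero maps $\varphi_{\nu,\lambda} : A_\nu \to \A$ with $\varphi_{\nu,\lambda}(y) \to \iota_\nu(y)$ in the strong$^*$ topology for every $y \in A_\nu$; since $A_\nu$ is finite dimensional, this convergence is uniform on the unit ball of $A_\nu$. Consequently, for each fixed $\nu$ and each $a \in \A$, one has $\varphi_{\nu,\lambda} \circ \psi_\nu(a) \to \iota_\nu \circ \psi_\nu(a) = \Phi_\nu(a)$ in strong$^*$, hence weak$^*$, topology as $\lambda$ varies.

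Finally, an iterated-net (or diagonal) argument combines the two limits: after a suitable reindexing one obtains a single net of order zero maps $\varphi_\mu : A_\mu \to \A$ and CP contractions $\psi_\mu : \A \to A_\mu$ with $\varphi_\mu \circ \psi_\mu(a) \to a$ weak$^*$ in $\M$ for every $a \in \A$. Since both $a$ and $\varphi_\mu \circ \psi_\mu(a)$ lie in $\A$, and the weak$^*$ topology on $\A^{**}$ restricts to the weak topology of $\A$ on elements of $\A$, this gives the required weak convergence. The main conceptual obstacle is precisely the transition from Lemma \ref{lemma:afd-weak-nuc}, which only provides \emph{forward} finite dimensional approximations to $\M$, to order zero maps going \emph{back} into $\A$; this is resolved exactly by the Kaplansky-type lifting supplied by Lemma \ref{lemma:weak-stability}, at the price of weakening strong$^*$ approximation to weak approximation.
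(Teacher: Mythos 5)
Your proposal is correct and follows essentially the same route as the paper: hyperfiniteness of $\A^{**}$ via Connes' theorem, forward maps from Lemma \ref{lemma:afd-weak-nuc}, and Lemma \ref{lemma:weak-stability} applied to the inclusions $A_\nu\hookrightarrow\A^{**}$ to pull the order zero maps back into $\A$, combined by a two-stage net argument. The only point you gloss over is that $\A^{**}$ generally has nonseparable predual, so Connes' theorem does not apply to it directly; the paper handles this by noting each GNS von Neumann algebra $\pi(\A)''$ has separable predual and that $\A^{**}$ is a direct sum of such hyperfinite pieces.
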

\begin{proof}
Since $\A$ is nuclear, $\A^{**}$ is injective. If $\pi$ is a GNS representation of $\A$, then $\pi(\A)''$ is a cutdown of $\A^{**}$ by a central projection, and therefore is injective as well. Since $\A$ is separable, $\pi(\A)''$ has separable predual, and hence is hyperfinite by Connes' theorem. Since $\A^{**}$ is a direct sum of (possibly uncountably many) hyperfinite $W^*$-algebras, it is hyperfinite itself (see \cite[Chapter XVI]{takesaki-III}). 

For $a_1,\cdots,a_n\in \A$ and a weak neighborhood $V$ of $0$ in $A$, take a weak neighborhood $W$ of $0$ with $W+W\subseteq V$. By Lemma \ref{lemma:afd-weak-nuc}, find a finite dimensional subalgebra $A_0\subset \A^{**}$ and a CP contraction $\psi:\A^{**}\rightarrow A_0$ such that $\psi(a_i)-a_i\in W$ for $i=1,\cdots,n$.   Applying Lemma \ref{lemma:weak-stability} to the inclusion map $A_0\hookrightarrow \A^{**}$, we can find an order zero map $\varphi:A_0\rightarrow\A$ with $\varphi(\psi(a_i))-\psi(a_i)\in W$ for $i=1,\cdots,n$. Thus $\varphi(\psi(a_i))-a_i\in V$ for $i=1,\cdots,n$. Using nets indexed by finite subsets of $\A$ and weak-neighborhoods of $0$ gives the result.
\end{proof}

We are now in a position to deduce the main result of the paper from the preceding lemmas. This works in the same way as the completely positive approximation property of a $C^*$-algebra is deduced from semidiscreteness of its bidual.  
\begin{Thm}\label{ConvexCom}
If $\A$ is a nuclear $C^*$-algebra, then for any finite set $F \subseteq \A$ and any $\eps>0$ there is a CP-approximation $(A_0,\psi,\varphi)$, $\psi:\A \to A_0$, $\varphi:A_0 \to \A$ for $F$ to within $\eps$ such that $\varphi,\psi$ are CP contractions, $A_0$ is finite dimensional, and $\varphi$ is a convex combination of finitely many contractive order zero maps. 
\end{Thm}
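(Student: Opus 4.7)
The plan is to upgrade the weak pointwise approximation supplied by Lemma \ref{L2} to a norm approximation by a convex combination of order zero maps, using a standard Hahn--Banach / Mazur convexity argument, and then to repackage finitely many approximations into one.

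First, I reduce to the case when $\A$ is separable. Given the finite set $F\subseteq\A$, iterating the completely positive approximation property inside $\A$ produces a separable nuclear sub-$C^*$-algebra $\A'\subseteq\A$ containing $F$. Any CP approximation of $(F,\eps)$ with range in $\A'$ yields one with range in $\A$: a map $\psi:\A'\to A_0$ extends to a CP contraction $\tilde\psi:\A\to A_0$ by Arveson's extension theorem (finite dimensional $C^*$-algebras are injective), and composing $\varphi:A_0\to\A'$ with the inclusion $\A'\hookrightarrow\A$ preserves complete positivity, contractivity, the order zero property and the convex combination structure.

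Next, apply Lemma \ref{L2} to the separable nuclear $\A$ to obtain nets $\psi_\nu:\A\to A_\nu$ and $\varphi_\nu:A_\nu\to\A$ of CP contractions with $A_\nu$ finite dimensional, the $\varphi_\nu$ order zero, and $\varphi_\nu\circ\psi_\nu(a)\to a$ weakly for all $a\in\A$. Writing $F=\{a_1,\ldots,a_n\}$, the tuples $(\varphi_\nu\psi_\nu(a_1),\ldots,\varphi_\nu\psi_\nu(a_n))$ converge weakly in $\A^n$ to $(a_1,\ldots,a_n)$. By Mazur's theorem the weak and norm closures of any convex subset of a Banach space coincide, so $(a_1,\ldots,a_n)$ lies in the norm closure of the convex hull of these tuples. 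Hence there exist indices $\nu_1,\ldots,\nu_k$ and weights $t_i\geq 0$ with $\sum_i t_i=1$ such that
$$\bigl\| \sum_{i=1}^k t_i\,\varphi_{\nu_i}\psi_{\nu_i}(a_j)-a_j \bigr\|<\eps, \qquad j=1,\ldots,n.$$

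Finally, I assemble the chosen data into a single CP approximation by setting $A_0:=\bigoplus_{i=1}^k A_{\nu_i}$, $\psi(a):=(\psi_{\nu_1}(a),\ldots,\psi_{\nu_k}(a))$, and $\varphi(x_1,\ldots,x_k):=\sum_i t_i \varphi_{\nu_i}(x_i)$. Then $\psi$ is a CP contraction (the direct sum norm is the coordinatewise maximum), and $\varphi=\sum_i t_i\tilde\varphi_i$, where $\tilde\varphi_i(x_1,\ldots,x_k):=\varphi_{\nu_i}(x_i)$ is a CP contraction preserving orthogonality (orthogonality in a direct sum is coordinatewise), hence is order zero. The composition $\varphi\circ\psi$ then approximates the identity on $F$ to within $\eps$ by the previous display. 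The only step of any real depth is Lemma \ref{L2}; after that the argument is a formal manipulation of convex combinations and direct sums, and the main subtlety I anticipate is making the reduction to the separable case precise.
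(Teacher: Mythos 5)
Your argument is correct and is essentially the paper's proof: reduce to the separable case, feed Lemma \ref{L2} into a Hahn--Banach/Mazur convexity argument to pass from point-weak to point-norm approximation by convex combinations, and then package the finitely many factorizations into one via a direct sum. The only cosmetic difference is that you apply Mazur's theorem to tuples in $\A^n$ whereas the paper phrases the same step as the coincidence of the point-weak and point-norm closures of a convex set of maps in $\B(\A)$.
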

\begin{proof}
Since any finite subset of a nuclear $C^*$-algebra is contained in a separable nuclear subalgebra (see \cite[Exercise 2.3.9]{Brown-Ozawa}), we may assume without loss of generality that $\A$ is separable. 

It follows from the Hahn-Banach theorem that any convex subset of $\B(\A)$ has the same point-norm and point-weak closures. Thus, let $K_0 \subseteq \B(\A)$ be the set of all contractive CP maps $T:\A \to \A$ which admit a factorization $T = \varphi \circ \psi$, where $\A \overset{\psi}{\longrightarrow} A_0 \overset{\varphi}{\longrightarrow} \A$, with $A_0$ finite dimensional, $\psi$ a CP contraction and $\varphi$ an order zero map, and let $K = \textrm{conv}(K_0)$. By Lemma \ref{L2}, the identity map $\A\rightarrow \A$ lies in the point-weak closure of $K_0$ and so there is a net of elements $T_\lambda \in K$ such that $T_\lambda(a) \to a$ in norm for all $a \in \A$. 

We finally note that any $T \in K$ can be decomposed as
$$\xymatrix{
\A \ar[r]_{\psi} \ar@/^1pc/[rr]^{T} & B \ar[r]_{\varphi} & \A
}$$ 
with $B$ finite dimensional and $\varphi$ a finite convex combination of order zero maps, as follows. Write $T = \sum_{i=1}^n t_iT_i$, a convex combination of elements $T_i \in K_0$. Decompose each $T_i$ as  
$$\xymatrix{
\A \ar[r]_{\psi_i} \ar@/^1pc/[rr]^{T_i} & B_i \ar[r]_{\varphi_i} & \A
}$$
with $\varphi_i$ order zero. Setting $B = \bigoplus_{i=1}^n B_i$, $\psi(a) = \bigoplus_{i=1}^n\psi_i(a)$ and, denoting $\tilde{\varphi}_i (a_1 \oplus a_2 \oplus ... \oplus a_n) = \varphi_i(a_i)$, we can take $\varphi =  \sum_{i=1}^n t_i\tilde{\varphi}_i$ to obtain the required decomposition of $T$. 
\end{proof}

\section{Near Inclusions} \label{near inclusions}

In this section we apply Theorem \ref{ConvexCom} to near containments. First we recall the definition of a near inclusion.

\begin{Def}
Let $\A$ and $\B$ be $C^*$-subalgebras of $\B(H)$. For $\gamma>0$, write $\A\subseteq_\gamma\B$ if for each $a\in\A$, there exists $b\in\B$ with $\|a-b\|\leq\gamma \|a\|$. Write $\A\subset_\gamma\B$ if there exists $\gamma'<\gamma$ with $\A\subseteq_\gamma\B$.
\end{Def}

Combining Theorem \ref{ConvexCom} with a perturbation theorem for order zero maps from \cite{cssww1}, we can linearize a near inclusion $\A\subset_\gamma\B$ when $\A$ is nuclear.

\begin{Lemma}\label{lin}
Let $\A\subset_\gamma\B$ be a near inclusion of $C^*$-algebras and suppose that $\A$ is nuclear. Let $\eta=2(2\gamma+\gamma^2)(2+2\gamma+\gamma^2)$. Then there exists $\eta_0<\eta$ with the property that for each finite subset $Z$ of the unit ball of $\A$, there exists a contractive CP map $\Phi:\A\rightarrow\B$ such that $\|\Phi(z)-z\|\leq \eta_0
$ for $z\in Z$.\end{Lemma}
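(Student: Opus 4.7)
The plan is to use Theorem \ref{ConvexCom} to approximate the identity on $\A$ by maps of the form $\varphi\circ\psi$ in which $\varphi\colon A_0\to\A$ is a convex combination of contractive order zero maps, and then perturb each order zero summand --- which has codomain $\A$, hence is nearly contained in $\B$ --- to a genuine order zero map into $\B$ using the existing perturbation theorem for order zero maps from \cite{cssww1}. Taking the corresponding convex combination of perturbed summands and composing with $\psi$ will produce the required CP contraction $\Phi\colon\A\to\B$.

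Concretely, I would first exploit the strict inequality in $\A\subset_\gamma\B$ to choose $\gamma'<\gamma$ with $\A\subseteq_{\gamma'}\B$, and set $\eta_1=2(2\gamma'+\gamma'^2)(2+2\gamma'+\gamma'^2)<\eta$. The point is that $\eta_1$ is the bound guaranteed by the order zero perturbation theorem of \cite{cssww1} for a single order zero contraction from a finite dimensional algebra into $\A$. Then fix $\eps>0$ once and for all, small enough that $\eta_0:=\eta_1+\eps<\eta$. Given a finite subset $Z$ of the unit ball of $\A$, apply Theorem \ref{ConvexCom} to the pair $(Z,\eps)$ to obtain a CP approximation $(A_0,\psi,\varphi)$ with $A_0$ finite dimensional, $\psi,\varphi$ CP contractions, $\|\varphi(\psi(z))-z\|<\eps$ for $z\in Z$, and $\varphi=\sum_{i=1}^n t_i\varphi_i$ a convex combination of contractive order zero maps $\varphi_i\colon A_0\to\A$.

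Next, for each $i$ the \cite{cssww1} perturbation theorem, applied to $\varphi_i\colon A_0\to\A\subseteq_{\gamma'}\B$, produces a contractive order zero map $\tilde\varphi_i\colon A_0\to\B$ with $\|\tilde\varphi_i(x)-\varphi_i(x)\|\leq\eta_1\|x\|$ for $x\in A_0$. Setting
$$\Phi=\Bigl(\sum_{i=1}^n t_i\tilde\varphi_i\Bigr)\circ\psi\colon\A\to\B,$$
$\Phi$ is a CP contraction, being a convex combination of CP contractions composed with the CP contraction $\psi$. For $z\in Z$ the triangle inequality then yields
$$\|\Phi(z)-z\|\leq\sum_{i=1}^n t_i\|\tilde\varphi_i(\psi(z))-\varphi_i(\psi(z))\|+\|\varphi(\psi(z))-z\|\leq\eta_1\|\psi(z)\|+\eps\leq\eta_0<\eta,$$
as required, with $\eta_0$ independent of $Z$.

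The heart of the argument is the \cite{cssww1} perturbation theorem for order zero maps, which is precisely where the specific form of the constant $\eta$ is engineered; the role of Theorem \ref{ConvexCom} is to reduce a perturbation problem for arbitrary CP contractions --- for which no direct result of this sort is available --- to the order zero setting where the existing perturbation machinery applies. The only subtlety is ensuring that $\eta_0$ can be chosen independently of $Z$, which is taken care of by the slack between $\gamma'$ and $\gamma$ afforded by the strict near-inclusion hypothesis.
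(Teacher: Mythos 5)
Your proposal is correct and follows essentially the same route as the paper: approximate the identity on $\A$ via Theorem \ref{ConvexCom}, perturb each contractive order zero summand into $\B$ via the order zero perturbation theorem of \cite{cssww1}, and recombine with the convex weights before composing with $\psi$. The only bookkeeping point is that \cite[Theorem 6.4]{cssww1} yields a CP map $\tilde\theta_i$ with $\|\tilde\theta_i-\varphi_i\|_{cb}\leq(2\gamma_0+\gamma_0^2)(2+2\gamma_0+\gamma_0^2)$ that need not be contractive (nor is it used as order zero); the paper then rescales to obtain contractions at the cost of doubling this estimate, which is precisely where the leading factor $2$ in $\eta$ comes from, rather than that factor being part of the cited theorem's conclusion as your write-up suggests.
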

\begin{proof}
Fix $\gamma_0<\gamma$ so that $\A\subseteq_{\gamma_0}\B$ and choose $\eps>0$ such that
$$
\eta_0=2(2\gamma_0+\gamma_0^2)(2+2\gamma_0+\gamma_0^2)+\eps<\eta.
$$
Given a finite subset $Z$ of the unit ball of $\A$, use Theorem \ref{ConvexCom} to produce a CP approximation $(A_0,\psi,\varphi)$, $\psi:\A\rightarrow A_0$, $\varphi:A_0\rightarrow\A$ for $Z$ to within $\eps$ such that $\varphi$ and $\psi$ are CP contractions, $A_0$ is finite dimensional, and $\varphi=\sum_{i=1}^n\lambda_i\varphi_i$ is a convex combination of order zero maps. For each $i$, use \cite[Theorem 6.4]{cssww1} to find a CP map $\tilde{\theta_i}:A_0\rightarrow\B$ with $\|\tilde{\theta_i}-\varphi_i\|_{cb}\leq (2\gamma_0+\gamma_0^2)(2+2\gamma_0+\gamma_0^2)$. By rescaling each $\tilde{\theta_i}$ if necessary we can find contractive CP maps $\theta_i:A_0\rightarrow\B$ with $\|\varphi_i-\theta_i\|_{cb}\leq 2\|\varphi_i-\tilde{\theta_i}\|_{cb}$. Define $\theta=\sum_{i=1}^n\lambda_i\theta_i$. This is a contractive CP map with $\|\theta-\varphi\|_{cb}\leq  2(2\gamma_0+\gamma_0^2)(2+2\gamma_0+\gamma_0^2)$. Thus $\Phi=\theta\circ\psi$ is a contractive CP map from $\A$ into $\B$ with
\begin{equation*}
\|\Phi(z)-z\|\leq\|\theta(
\psi(z))-\varphi(\psi(z))\|+\|\varphi(\psi(z))-z\|\leq \eta_0,\quad z\in Z.\qedhere
\end{equation*}
\end{proof}

The embedding theorem below now follows immediately from \cite[Lemma 4.1]{cssww1} (taking the $\mu$ of that lemma to be small enough that $8\sqrt{6}(\eta_0)^{1/2}+\eta_0+\mu<8\sqrt{6}\eta^{1/2}+\eta$).

\begin{Thm}\label{ni}
Let $\A\subset_\gamma\B$ be a near inclusion of $C^*$-algebras for some $\gamma$ satisfying
$$
\eta=2(2\gamma+\gamma^2)(2+2\gamma+\gamma^2)< 1/210000.
$$
Suppose that $\A$ is separable and nuclear. Then for each finite subset $X$ of the unit ball of $\A$, there exists an embedding $\theta:\A\hookrightarrow\B$ with $\|\theta(x)-x\|\leq 8\sqrt{6}\eta^{1/2}+\eta$, for $x\in X$.
\end{Thm}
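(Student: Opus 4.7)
The plan is to combine Lemma \ref{lin} with the abstract perturbation result \cite[Lemma 4.1]{cssww1} to produce the required embedding. Lemma \ref{lin} already provides, for any finite subset $Z$ of the unit ball of $\A$, a contractive CP map $\Phi:\A\to\B$ with $\|\Phi(z)-z\|\leq\eta_0$ for $z\in Z$, where $\eta_0 < \eta$. The role of \cite[Lemma 4.1]{cssww1} is to upgrade such a sufficiently precise contractive CP near-embedding of a separable nuclear $\A$ into an honest $^*$-homomorphism $\theta:\A\hookrightarrow\B$, at the cost of an additional perturbation of order $8\sqrt{6}\eta_0^{1/2}+\eta_0+\mu$ on the fixed finite set, where $\mu>0$ is a slack parameter one is free to choose.

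Concretely, given $X$ in the unit ball of $\A$, I would first choose $\mu>0$ small enough that
$$
8\sqrt{6}(\eta_0)^{1/2}+\eta_0+\mu \;<\; 8\sqrt{6}\eta^{1/2}+\eta;
$$
this is possible by continuity, since the proof of Lemma \ref{lin} already delivers some $\eta_0<\eta$. Next I would apply Lemma \ref{lin} to a suitable finite set $Z\supseteq X$ (enlarged if \cite[Lemma 4.1]{cssww1} requires approximate closure of $Z$ under relevant algebraic operations at scale $\mu$) to obtain a CP contraction $\Phi:\A\to\B$ with $\|\Phi(z)-z\|\leq\eta_0$ for $z\in Z$. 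Feeding $\Phi$ together with the slack $\mu$ into \cite[Lemma 4.1]{cssww1} then produces an embedding $\theta:\A\hookrightarrow\B$ with $\|\theta(x)-x\|\leq 8\sqrt{6}\eta_0^{1/2}+\eta_0+\mu$ for $x\in X$, which by the choice of $\mu$ is bounded by $8\sqrt{6}\eta^{1/2}+\eta$. The numerical condition $\eta<1/210000$ is precisely what guarantees that we lie in the regime where \cite[Lemma 4.1]{cssww1} applies.

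All the genuine content has already been done: Theorem \ref{ConvexCom} (used via Lemma \ref{lin}) provides CP approximations that factorize through a finite dimensional algebra by a \emph{convex combination of order zero maps}, which is exactly what is needed in order to apply the order zero perturbation theorem \cite[Theorem 6.4]{cssww1} summand-by-summand without incurring a factor depending on the number of summands. Thus there is no substantive obstacle in the present proof — the argument is a bookkeeping combination of Lemma \ref{lin} and \cite[Lemma 4.1]{cssww1}, with $\mu$ chosen purely to absorb the strict inequality $\eta_0<\eta$.
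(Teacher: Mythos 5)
Your proposal is correct and is essentially identical to the paper's own (very brief) argument: the paper likewise obtains Theorem \ref{ni} by feeding the CP contraction from Lemma \ref{lin} into \cite[Lemma 4.1]{cssww1}, choosing the slack $\mu$ of that lemma small enough that $8\sqrt{6}(\eta_0)^{1/2}+\eta_0+\mu<8\sqrt{6}\eta^{1/2}+\eta$. No substantive differences to report.
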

 
Note that the examples of Johnson \cite{johnson} show that in the situation of Theorem \ref{ni} it is not generally possible to construct embeddings $\theta:\A\rightarrow\B$ which are uniformly close to the inclusion of $\A$ into the underlying $\B(H)$.

\section{Completely positive maps from exact $C^*$-algebras}\label{sec2}

In this last section, we examine weakly nuclear completely positive maps from exact algebras into properly infinite von Neumann algebras.  Via the $\Oh_2$-embedding theorem, such maps are automatically nuclear (see Remark \ref{rem} below). More surprisingly we can approximate all such maps by convex combinations of $^*$-homomorphisms. In fact more is true. An average of two unitary conjugates of a given $^*$-homomorphism can be used to make this approximation. This can be used to give a different proof of Theorem \ref{ConvexCom} under the stronger assumption that $\A^{**}$ is properly infinite (as happens when $\A$ is stable). This alternative approach avoids using Connes' theorem on the equivalence of injectivity and hyperfiniteness. We begin by recalling the standard fact that properly infinite von Neumann algebras absorb $\B(\ell^2)$ tensorially.
\begin{Lemma}
\label{Lemma:absorb-B(H)}
Let $\M$ be a properly infinite von-Neumann algebra.  Then $\M \cong \M\,\overline{\otimes}\,\B(\ell^2)$.
\end{Lemma}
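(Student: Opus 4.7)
The plan is to exploit the structural consequence of proper infiniteness: $\M$ contains isometries $\{s_n\}_{n\geq 1}$ with $s_i^*s_j=\delta_{ij}1$ and $\sum_n s_n s_n^*=1$ in the strong operator topology, obtained by partitioning $1_\M$ into countably many mutually orthogonal projections each Murray--von Neumann equivalent to $1_\M$. Setting $p=s_1s_1^*$, the map $x\mapsto s_1xs_1^*$ is a normal $^*$-isomorphism $\M\to p\M p$ (with inverse $y\mapsto s_1^*ys_1$), so it suffices to exhibit an isomorphism $p\M p\,\overline{\otimes}\,\B(\ell^2)\cong\M$.

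For this I would match the standard matrix units $e_{ij}\in\B(\ell^2)$ with the partial isometries $s_is_j^*\in\M$ and build a $^*$-homomorphism
\[
\Phi:\; p\M p\,\overline{\otimes}\,\B(\ell^2)\;\longrightarrow\;\M,\qquad y\otimes e_{ij}\;\longmapsto\; s_is_1^*\,y\,s_1s_j^*.
\]
On the algebraic tensor product this is a $^*$-homomorphism (routine from $s_j^*s_k=\delta_{jk}1$ together with $y=pyp$); it is injective because $s_m^*\Phi(y\otimes e_{ij})s_n=\delta_{mi}\delta_{nj}s_1^*ys_1$ recovers $\alpha^{-1}(y)$ from an elementary tensor, where $\alpha(x)=s_1xs_1^*$; and the identity $x=\sum_{i,j}s_i(s_i^*xs_j)s_j^*$ (valid strongly for every $x\in\M$) shows that the image is strong-operator dense in $\M$.

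The main technical point will be extending $\Phi$ to a normal $^*$-isomorphism on the von Neumann tensor product. This is the standard ``matrix-unit decomposition'' of a von Neumann algebra equipped with a countable system of matrix units summing to the identity (see, e.g., Takesaki, Vol.~I, Chapter~V), so I would not expect any genuine obstacle; the lemma is folklore and, consistent with the authors' presentation of it as a well-known fact, I would keep the write-up very brief, either citing a standard reference or sketching the identification above.
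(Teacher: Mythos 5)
Your proof is correct and rests on the same key fact as the paper's: the matrix-unit decomposition $\M \cong e\M e\,\overline{\otimes}\,\B(\ell^2)$ arising from a countable partition of $1$ into mutually orthogonal, mutually equivalent projections (Takesaki, Proposition V.1.22), which you sketch explicitly where the paper simply cites it. The only difference is in the final assembly: you invoke the (standard and correct) stronger fact that the projections can be taken individually equivalent to $1$, yielding $\M\cong p\M p$ and hence $\M\,\overline{\otimes}\,\B(\ell^2)\cong p\M p\,\overline{\otimes}\,\B(\ell^2)\cong\M$, whereas the paper gets by with mutual equivalence alone and instead absorbs the extra factor via $\B(\ell^2)\cong\B(\ell^2)\,\overline{\otimes}\,\B(\ell^2)$.
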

\begin{proof}
Since $1$ is properly infinite, $\M$ has a countably infinite family of mutually equivalent orthogonal projections which sum up to 1. Denoting one of those projections by $e$, it follows from \cite[Proposition V.1.22]{takesaki-I},  that $\M \cong e\M e\,\overline{\otimes}\,\B(\ell^2)$. Since $\B(\ell^2) \cong  \B(\ell^2)\,\overline{\otimes}\,\B(\ell^2)$, we see that 
\begin{equation*}
\M \cong e\M e\,\overline{\otimes}\,\B(\ell^2)\,\overline{\otimes}\,\B(\ell^2) \cong \M\,\overline{\otimes}\, \B(\ell^2).\qedhere
\end{equation*}
\end{proof}

We next prove a dilation lemma for unital completely positive maps into properly infinite von-Neumann algebras. In this lemma and the following theorem, we write $\sim$ for the Murray-von Neumann equivalence relation on projections.  

\begin{Lemma}
\label{Lemma:dilation}
Let $\A$ be a separable unital $C^*$-algebra, $\M$ be a properly infinite von Neumann algebra and $\varphi:\A\rightarrow \M$ a unital completely positive map.  Given a projection $p\in \M$ with $1\sim p\sim 1-p$, there exists a $^*$-homomorphism $\theta:\A\rightarrow \M$ with $p\theta(x)p=p\varphi(x)p$ for all $x\in \A$.
\end{Lemma}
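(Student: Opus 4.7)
The plan is to realize $\theta$ as a Stinespring-type $^*$-homomorphism dilation of the compression $\psi := p\varphi(\cdot)p : \A \to p\M p$ (which is unital since $\varphi(1) = 1$), performed entirely inside $\M$. Proper infiniteness of $\M$ together with the three-way equivalence $1 \sim p \sim 1-p$ supplies exactly the room needed. The lemma reduces to constructing a $^*$-homomorphism $\theta : \A \to \M$ with $p\theta(x)p = \psi(x)$.

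First I would set up a matrix-unit system for $\M$ with $p$ as the ``top-left'' entry. Using $p \sim 1$ (so $p\M p \cong \M$ is properly infinite, and hence $p\M p \cong p\M p \,\overline{\otimes}\, \B(\ell^2)$ by Lemma \ref{Lemma:absorb-B(H)}) together with $p \sim 1-p$, one chooses mutually orthogonal projections $q_1 = p, q_2, q_3, \ldots$ in $\M$, each Murray--von Neumann equivalent to $p$, summing strongly to $1_\M$. The accompanying partial isometries give a $^*$-isomorphism $\M \cong p\M p \,\overline{\otimes}\, \B(\ell^2)$ under which $p$ corresponds to $1_{p\M p} \otimes e_{11}$ and the top-left corner is naturally identified with $p\M p$.

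Next I would apply Paschke's self-dual Hilbert module version of Stinespring's theorem to $\psi$: there exist a countably generated self-dual Hilbert $p\M p$-module $E$, a unital $^*$-homomorphism $\rho : \A \to \mathcal{L}_{p\M p}(E)$, and a vector $\xi \in E$ with $\langle \xi, \xi\rangle = p$ such that $\psi(x) = \langle \xi, \rho(x)\xi\rangle$ for all $x \in \A$. For the rank-one projection $e := |\xi\rangle\langle \xi|$, the compression $e\rho(x)e$ corresponds to $\psi(x)$ under the canonical identification $e \mathcal{L}_{p\M p}(E) e \cong p\M p$. By Kasparov's stabilization theorem, $E$ sits as an orthogonal direct summand of $\ell^2(p\M p)$, and one may arrange this embedding so that $\xi$ lands on the first standard basis vector. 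Extending $\rho$ by any auxiliary unital $^*$-representation of $\A$ on the complementary summand (such exists since $\A$ is separable and the target $\mathcal{L}_{p\M p}(\ell^2(p\M p)) \cong p\M p \,\overline{\otimes}\, \B(\ell^2)$ is properly infinite, hence contains a copy of $\Oh_2$) yields a unital $^*$-homomorphism $\bar\rho : \A \to p\M p \,\overline{\otimes}\, \B(\ell^2)$ whose $e_{11}$-corner corresponds to $\psi$. Composing with the identification from the previous paragraph produces the required $\theta$, and the compatible upper-left corner identifications ensure $p\theta(x)p = \psi(x) = p\varphi(x)p$.

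The main obstacle is the absorption step in the last paragraph: ensuring that the abstractly constructed Paschke dilation lands inside $\M$ with the distinguished rank-one projection $e$ matching the given $p$. This is precisely what the hypothesis $1 \sim p \sim 1-p$ in the properly infinite $\M$ arranges, via the matrix decomposition $\M \cong p\M p \,\overline{\otimes}\, \B(\ell^2)$ with $p$ as the first diagonal entry.
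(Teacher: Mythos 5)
Your proposal is, at its core, the same argument as the paper's: both dilate the compression $p\varphi(\cdot)p$ via the Hilbert-module Stinespring theorem to a $^*$-homomorphism landing in (a subalgebra of) $\B(\ell^2)\,\overline{\otimes}\,\M$, and then use proper infiniteness together with $1\sim p\sim 1-p$ to move the dilation back into $\M$ so that the distinguished corner becomes $p$. The paper performs the absorption by hand, building an isometry $u=e\otimes v+w$ from explicit Murray--von Neumann bookkeeping of projections in $\B(\ell^2)\,\overline{\otimes}\,\B(\ell^2)\,\overline{\otimes}\,\M$, whereas you package the same step as Kasparov stabilization plus the identification $\M\cong p\M p\,\overline{\otimes}\,\B(\ell^2)$ sending $p$ to $1\otimes e_{11}$; these are the same mathematics in different clothing.

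Two points in your writeup need repair. The only genuine error is the justification of the auxiliary representation on the complementary summand $F$: you claim a unital $^*$-homomorphism $\A\to\mathcal{L}(F)$ exists because the target contains a unital copy of $\Oh_2$. A general separable unital $C^*$-algebra admits no unital $^*$-homomorphism into $\Oh_2$ (the image would be a unital $C^*$-subalgebra of $\Oh_2$, hence an exact quotient of $\A$, which need not exist), so this argument fails; here $\A$ is not assumed exact. Fortunately the step is unnecessary: the lemma does not require $\theta$ to be unital, so you may extend $\rho$ by zero on $F$ (or, if you insist on unitality, by $a\mapsto\sigma(a)\otimes 1$ for a faithful unital representation $\sigma$ of $\A$ on a separable Hilbert space, using $F\cong\ell^2(p\M p)$). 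A smaller imprecision: for the $C^*$-module $\ell^2(p\M p)$ one has $\mathcal{L}\bigl(\ell^2(p\M p)\bigr)=M(\K\otimes p\M p)$, which is a proper subalgebra of $\B(\ell^2)\,\overline{\otimes}\,p\M p$; the isomorphism you assert holds only for the self-dual completion, while Kasparov stabilization is a statement about $C^*$-modules. The clean route (and the one the paper takes) is to stabilize at the $C^*$-module level and then use only the containment $M(\K\otimes p\M p)\subseteq\B(\ell^2)\,\overline{\otimes}\,p\M p$, which the paper establishes as the first step of its proof. With these adjustments your argument goes through.
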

\begin{proof}
First note that the multiplier algebra $M(\K\otimes \M)$ of $\K\otimes \M$ is contained in $\B(\ell^2)\,\overline{\otimes}\,\M$.  Indeed, representing $\M$ non-degenerately on a Hilbert space $H$ so that $\K\otimes \M$ is non-degenerately represented on $\ell^2\otimes H$, take an increasing sequence $(P_n)_n$ of finite rank projections in $\K$ converging strongly to $1_{\ell^2(\N)}$. Given any idealizer $x\in\B(\ell^2 \otimes H)$ (i.e. $x$ satisfying $x(\K\otimes \M),(\K\otimes \M)x\subseteq \K\otimes \M$), it follows that $(P_n\otimes 1)x(P_n\otimes 1)\in \K\otimes \M$. By taking strong operator limits we see that $x\in \B(\ell^2)\,\overline{\otimes}\,\M$.

Let $e$ be a rank one projection in $\K$ and, by identifying $\M$ with the subalgebra $(e\otimes 1)(\K\otimes \M)(e\otimes 1)$ of $\K\otimes \M$,  view $\varphi$ as a completely positive map into $M(\K\otimes \M)$.  Write $p_0=e\otimes p$, and apply Kasparov's Stinespring theorem \cite[Theorem 3]{Kas} to the map $x\mapsto p_0\varphi(x)p_0$ to obtain a partial isometry $v\in M(\K\otimes \M)$ and a $^*$-homomorphism $\pi:\A\rightarrow M(\K\otimes \M)$ with $p_0\varphi(x)p_0=v\pi(x)v^*$ for $x\in \A$. By the previous paragraph, we can view $\pi$ as taking values in $\B(\ell^2)\,\overline{\otimes}\, \M$ and $v$ as a partial isometry in this algebra. We may assume that $v=v\pi(1)$ so that $vv^*=p_0$. Set $q_0=v^*v$.

Write $\Nh_0=\B(\ell^2)\,\overline{\otimes}\,\M$. Under the identification of $\M$ with $\B(\ell^2)\,\overline{\otimes}\,\M$ of Lemma \ref{Lemma:absorb-B(H)}, we see that $e\otimes 1_\M$ is identified with $e\otimes 1_{\B(\ell^2)}\otimes 1_\M\sim 1_{\B(\ell^2)}\otimes 1_{\B(\ell^2)}\otimes 1_\M$. Undoing this identification, we have $e\otimes 1_\M\sim 1_{\B(\ell^2)}\otimes 1_\M$ in $\Nh_0$.  Thus $q_0\sim p_0=e\otimes p\sim e\otimes 1_\M\sim 1_{\B(\ell^2)}\otimes 1_\M$.  Now consider $\Nh_1=\B(\ell^2)\,\overline{\otimes}\,\Nh_0$ and let $p_1=e\otimes p_0$, $q_1=e\otimes q_0$. Arguing as above, we see that $p_1\sim q_1\sim 1_{\Nh_1}$. Furthermore
\begin{align*}
1_{\Nh_1}-q_1=&\,(1_{\B(\ell^2)}-e)\otimes q_0+(1_{\B(\ell^2)}\otimes (1_{\Nh_0}-q_0))\\
 \sim&\,1_{\B(\ell^2)}\otimes q_0+(1_{\B(\ell^2)}\otimes (1_{\Nh_0}-q_0))=1_{\Nh_1}.
\end{align*}
In conclusion, we obtain the equivalence $e\otimes e\otimes (1_\M-p)\sim e\otimes e\otimes 1_\M\sim1_{\Nh_1}\sim 1_{\Nh_1}-q_1$ and so there is a partial isometry $w\in \Nh_1$ with $ww^*=e\otimes e\otimes(1_{\M}-p)$ and $w^*w=1_{\Nh_1}-q_1$.  Since $e\otimes v$ and $w$ have orthogonal domain projections and orthogonal range projections, $u=e\otimes v+w$ is an isometry in $\Nh_1$ with $uu^*=e\otimes e\otimes 1_{\M}$.  Define a $*$-homomorphism $\theta:\A\rightarrow (e\otimes e\otimes 1_{\M})\Nh_1(e\otimes e\otimes 1_{\M})$ by $\theta(x)=u(1\otimes \pi(x))u^*$.  As $(e\otimes e\otimes p)u=e\otimes v$, we have
$$
(e\otimes e\otimes p)\theta(x)(e\otimes e\otimes p)=(e\otimes v)(1\otimes \pi(x))(e\otimes v)^*=e\otimes p_0\varphi(x)p_0,\quad x\in \A.
$$
Identifying $\M$ with $(e\otimes e\otimes 1_{\M})\Nh_1(e\otimes e\otimes 1_{\M})$, we can view $\theta$ as a $^*$-homomorphism from $\A$ into $\M$ with $p\theta(x)p=p\varphi(x)p$ for $x\in \A$, as required.
\end{proof}

We can now establish our approximation result for weakly nuclear completely positive contractions from separable exact algebras into properly infinite von Neumann algebras.  Given a state $\lambda$ on a von Neumann algebra $\M$, we write $\|x\|_\lambda$ for $\lambda(x^*x)^{1/2}$.

\begin{Thm}\label{exact-embedding}
Let $\M$ be a properly infinite von-Neumann algebra and let $\B$ be a separable exact $C^*$-algebra. Let $\Phi:\B \to \M$ be a weakly nuclear completely positive contraction. 
\begin{enumerate}
\item There exists an injective $^*$-homomorphism $\varphi:\B \to \M$ such that $\Phi$ is in the point-weak$^*$ closure of the convex hull of the family of maps $\{u^* \varphi (\cdot) u \mid u \in U(\M)\}$.  The $^*$-homomorphism $\varphi$ can be taken to be unital when $\Phi$ is unital.
\item
\label{furthermore} Furthermore, if $\A \subseteq \M$ is a weak$^*$-dense $C^*$-algebra such that $\Phi(\B) \subseteq \A$ and $\Oh_2$ is unitally contained in the multiplier algebra $M(\A)\subseteq \M$ (e.g. if $\A$ is stable) then one can find an injective $^*$-homomorphism $\varphi:\B\rightarrow M(\A)$ such that $\Phi$ is in the point-$\sigma(M(\A),\M_*)$-closure of the convex hull of the family of maps $\{e^{-ih}\varphi(\cdot)e^{ih} \mid h \in \A_{s.a.}, \|h\|<\pi\}$.  Again $\varphi$ can be taken to be unital when $\Phi$ is unital.
\end{enumerate}
\end{Thm}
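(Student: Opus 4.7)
My plan is to reduce to the unital case, apply the dilation Lemma \ref{Lemma:dilation}, and then use the infinite multiplicity available from Lemma \ref{Lemma:absorb-B(H)} together with a Voiculescu-type absorption to realize $\Phi$ as a convex combination of unitary conjugates of a fixed injective $^*$-homomorphism $\varphi$.

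For the reduction: if $\Phi$ is not unital, extend it to a UCP map $\tilde\Phi:\tilde\B\to\M$ on the unitization by setting $\tilde\Phi(1)=1_\M$, which preserves weak nuclearity. A unital injective $^*$-homomorphism $\tilde\varphi$ satisfying the conclusion of (1) for $\tilde\Phi$ restricts to the required $\varphi$ for $\Phi$, so we may assume $\Phi$ is UCP. Lemma \ref{Lemma:dilation} applied with a projection $p\in\M$ satisfying $p\sim 1-p\sim 1_\M$ (available by proper infiniteness) produces a unital $^*$-homomorphism $\theta:\B\to\M$ with $p\theta(x)p=p\Phi(x)p$. Using $\M\cong\M\,\overline{\otimes}\,\B(\ell^2)$, construct $\varphi:\B\to\M$ as a faithful unital $^*$-homomorphism with infinite multiplicity---for instance, $\theta\otimes 1_{\B(\ell^2)}$ augmented by a faithful embedding $\B\hookrightarrow\Oh_2\hookrightarrow p_0\M p_0$ to ensure injectivity. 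The infinite multiplicity guarantees that $\varphi$ absorbs every other representation $\B\to\M$ up to unitary conjugacy, providing a von Neumann algebra analogue of Voiculescu's theorem.

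The core averaging step realizes $\Phi$ as a point-weak$^*$ limit of convex combinations $\sum_i t_i u_i^*\varphi(\cdot)u_i$. Applying Lemma \ref{Lemma:dilation} with both $p$ and $1-p$ yields $^*$-homomorphisms $\theta_1,\theta_2:\B\to\M$ matching $\Phi$ on the $p$- and $(1-p)$-corners respectively; convex combinations of $\theta_1,\theta_2$ together with conjugations by the swap unitary $v=w+w^*$ (where $w$ implements $p\sim 1-p$) should recover the block-diagonal part $p\Phi(\cdot)p+(1-p)\Phi(\cdot)(1-p)$ of $\Phi$. Producing the off-diagonal component $p\Phi(\cdot)(1-p)+(1-p)\Phi(\cdot)p$ is more delicate, and I expect this to be the main obstacle---the dilation lemma only matches $\Phi$ after corner compression, so cross-terms must be created by averaging rather than appearing directly. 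My best guess is that a $2\times 2$ amplification $\M\cong\M\,\overline{\otimes}\,M_2$ combined with the absorbing property of $\varphi$ handles this. Once $\Phi$ is expressed as a convex combination of $^*$-homomorphisms, each summand is approximated by a unitary conjugate $u^*\varphi(\cdot)u$ via the absorbing property of $\varphi$.

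For part (2), the same strategy is executed within $M(\A)\subseteq\M$ rather than all of $\M$, enabled by the unital inclusion $\Oh_2\subseteq M(\A)$. Unitaries in $\Oh_2$ are homotopic to $1$ within $\Oh_2$, so they can be written as $e^{-ih}$ with $h\in(\Oh_2)_{s.a.}$ and $\|h\|<\pi$. Using the weak$^*$-density of $\A$ in $\M$ together with continuity of exponentiation in the $\sigma(M(\A),\M_*)$-topology, such $h$ can be approximated by elements of $\A_{s.a.}$ of norm less than $\pi$, producing unitaries of the required form. Carrying out the averaging of part (1) inside $M(\A)$ in the $\sigma(M(\A),\M_*)$-topology then yields (2).
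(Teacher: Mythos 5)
There are two genuine gaps here. First, your argument never uses the weak nuclearity of $\Phi$, and the ``von Neumann algebra analogue of Voiculescu's theorem'' you invoke --- that an infinite-multiplicity $\varphi$ absorbs every representation of a general separable exact $\B$ into a general properly infinite $\M$ up to (approximate) unitary conjugacy --- is not an available result; it is exactly where the real work would have to happen. The paper sidesteps the need for any such absorption by first reducing to $\B=\Oh_2$: weak nuclearity gives point-weak$^*$ approximants $\Phi_\tau=T_\tau\circ S_\tau$ factoring through matrix algebras, each $S_\tau$ is extended to $\Oh_2\supseteq\B$ by Arveson's extension theorem (using the Kirchberg--Phillips embedding $\B\hookrightarrow\Oh_2$), and a point-weak$^*$ cluster point extends $\Phi$ to $\Oh_2$. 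Once $\B=\Oh_2$, the uniqueness statement you need is precisely R{\o}rdam's theorem that any two unital embeddings of $\Oh_2$ into $\M$ are approximately unitarily equivalent, and injectivity of $\varphi$ is automatic from simplicity of $\Oh_2$. Without this reduction, both your construction of an ``absorbing'' $\varphi$ and your final step (replacing each homomorphism summand by a unitary conjugate of $\varphi$) are unsupported.

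Second, the step you flag as the main obstacle --- producing the off-diagonal component $p\Phi(\cdot)(1-p)+(1-p)\Phi(\cdot)p$ --- is resolved in the paper not by producing it but by making it negligible. Since the approximation is only required in the point-weak$^*$ topology, one fixes the finitely many normal functionals $\rho_1,\dots,\rho_n$ to be tested, dominates them by a single normal state $\lambda$ via the Radon--Nikodym theorem for states, and then chooses the projection $p_1\sim 1-p_1\sim 1$ so that $\lambda(1-p_1)\leq\eps'$. With $s=1-2p_1$ one has the exact identity $\tfrac{1}{2}(s^*\kappa(a)s+\kappa(a))=p_1\kappa(a)p_1+(1-p_1)\kappa(a)(1-p_1)$, and every term involving $1-p_1$ (the off-diagonal parts of $\Phi$ and the $(1-p_1)$-corners of both $\kappa$ and $\Phi$) is of order $\sqrt{\eps'}$ in $\|\cdot\|_\lambda$. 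Thus a single dilation $\kappa$ and the symmetry $s$ suffice --- no second homomorphism, no $2\times 2$ amplification, and in fact only two unitary conjugates $u_1=us$, $u_2=u$ are needed. Your sketch of part (2) (writing unitaries as exponentials and using Kaplansky density) is consistent with the paper, but it rests on the unproved part (1).
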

\begin{proof}
We first reduce to the case in which both $\B$ and $\Phi$ are unital. If $\B$ is not unital, then we can add a unit to $\B$ and extend $\Phi$ uniquely to a unital CP map from the unitization. If $\B$ was already unital but $\Phi$ is not, then we can adjoin a new unit to $\B$ and extend $\Phi$ to a unital map as well (see \cite[Section 2.2]{Brown-Ozawa}, for example). We thus assume that $\B$ is unital and $\Phi$ is unital.

Next, we reduce to the case where $\B$ is the Cuntz algebra $\Oh_2$. By \cite[Theorem 2.8]{kirchberg-phillips}, we can find a unital embedding of $\B$ in $\Oh_2$. Since $\Phi$ is weakly nuclear, we can take a net $(\Phi_\tau)$ of CP-approximations to $\Phi$ in the point-weak$^*$ topology which factorize through finite dimensional algebras as
$$\xymatrix{
\B \ar[r]_{S_{\tau}} \ar@/^1pc/[rr]^{\Phi_{\tau}} & M_{n_{\tau}} \ar[r]_{T_{\tau}} & \M
}$$
where $S_{\tau},T_{\tau}$ are CP and unital. Use Arveson's extension theorem to extend $S_{\tau}$ to a unital CP map $\tilde{S}_{\tau}:\Oh_2 \to M_{n_{\tau}}$. Then any point-weak$^*$ cluster point of $(T_\tau\circ\tilde{S}_\tau)$ is an extension of $\Phi$ to $\Oh_2$. Thus we may assume without loss of generality that $\B = \Oh_2$. 
  
Since $\M$ is properly infinite, $\Oh_2$ embeds unitally into $\M$.  Fix such a unital embedding $\varphi:\Oh_2\rightarrow\M$. We will show that convex combinations of unitary conjugates of $\varphi$ can be used to approximate $\Phi$. To this end, fix a finite dimensional operator system $X\subseteq \M$, normal states  $\rho_1,...\rho_n \in \M_*$ and $\eps>0$. We will find unitaries $u_1,u_2\in U(\M)$ such that
$$
\left|\rho_i\left(\frac{1}{2}(u_1^*\varphi(x)u_1+u_2^*\varphi(x)u_2)-\Phi(x)\right)\right|<\eps\|x\|,\quad x\in X.
$$
Since $X$, $\varepsilon$ and the states $\rho_1,\cdots,\rho_n$ are arbitrary, this will establish the result.  Note that it actually proves more; $\Phi$ can be approximated by a convex combination of two unitary conjugates of $\varphi$.

By the Radon-Nikodym theorem for states (see \cite[Proposition 3.8.3]{Brown-Ozawa}), there is a normal state $\lambda\in\M_*$  such that in the corresponding GNS representation $\pi_{\lambda}$ with cyclic vector $\xi_{\lambda}$ we can find positive operators $y_1,...,y_n \in \pi_{\lambda}(\M)'$ satisfying $\rho_i(x) = \lb \pi_{\lambda}(x)\xi_{\lambda},y_i\xi_{\lambda} \rb$ for $i=1,\cdots,n$.  Fix $\eps'>0$ such that $3\|y_i\|\sqrt{\eps'}+\eps'<\eps$ for all $i$.  As $\M$ is properly infinite, there exists a projection $p_1\in\M$ with $p_1\sim 1-p_1\sim 1$ such that
\begin{equation}\label{eq:2.3-1}
\|(1-p_1)\Phi(x)\|_\lambda^2=\lambda(\Phi(x)^*(1-p_1)\Phi(x))\leq \eps'\|x\|^2,\quad x\in X.
\end{equation}
By Lemma \ref{Lemma:dilation}, there is a $^*$-homomorphism $\kappa:\Oh_2 \to \M$ such that
\begin{equation}\label{eq:2.3-2}
p_1\kappa(a)p_1 = p_1\Phi(a)p_1,\quad a\in\Oh_2.
\end{equation}

Let $s = 1-2p_1$, then $s$ is a self-adjoint unitary and we have 
$$
\frac{1}{2} (s^*\kappa(a) s + \kappa(a)) = p_1\kappa(a)p_1 + (1-p_1)\kappa(a)(1-p_1),\quad a\in\Oh_2.
$$
For any $y\in\M$, the estimate (\ref{eq:2.3-1}) gives
$$
\|y(1-p_1)\|^2_{\lambda} = \lambda((1- p_1)y^*y(1-p_1) ) \leq \|y\|^2\lambda(1-p_1)  \leq\|y\|^2\eps',
$$
as $1\in X$. Using (\ref{eq:2.3-2}), we have 
\begin{align*}
&\,\|(1-p_1)\kappa(x)(1-p_1)+p_1\kappa(x)p_1 - \Phi(x)\|_{\lambda}\\
\leq&\,\|(1-p_1)\kappa(x)(1-p_1)\|_\lambda+\|(1-p_1)\Phi(x)\|_{\lambda} + \|p_1\Phi(x)(1-p_1)\|_{\lambda}\leq\,3\|x\|\sqrt{\eps'},
\end{align*} 
for any $x \in X$,
using (\ref{eq:2.3-1}) to estimate the middle term and the previous estimate for the first (with $y=(1-p_1)\kappa(x)$) and third (with $y=p_1\Phi(x)$). Therefore
$$
|\rho_i(p_1\kappa(x)p_1+(1-p_1)\kappa(x)(1-p_1) - \Phi(x))| \leq 3\|x\|\|y_i\|\sqrt{\eps'},\quad i=1,\cdots n,\ x\in X.
$$

Since any two unital embeddings $\Oh_2\rightarrow\M$ are approximately unitary equivalent (as any properly infinite von Neumann algebra $\M$ satisfies the hypotheses of \cite[Theorem 3.6]{rordam}), we can choose a unitary $u \in U(\M)$ such that 
$$
\|u^*\varphi(x)u - \kappa(x)\|<\eps'\|x\|,\quad x\in X.
$$
Taking $u_1=us$ and $u_2=u$, we have 
\begin{align*}
\left|\rho_i\left(
\frac{1}{2} (u_1^*\varphi(x)u_1 + u_2^*\varphi(a)u_2) - \Phi(x) 
\right)\right| \leq&\,\left|\rho_i\left(\frac{1}{2}(s^*\kappa(x)s+\kappa(x))-\Phi(x)\right)\right|+\eps'\|x\|\\
\leq&\,3\|x\|\|y_i\|\sqrt{\eps'} + \|x\|\eps'\\
\leq&\,\|x\|\eps,\quad i=1,\cdots,n,\ x\in X,
\end{align*}
as claimed.

For part \ref{furthermore}, note that we had the freedom to choose any embedding $\varphi:\Oh_2\rightarrow\M$ and so we can ensure that the range of $\varphi$ lies in $M(\A)$.  As for the unitaries, as $\M$ is a von-Neumann algebra, we can write the unitaries in the form $e^{ib}$ for $b \in \M_{sa}$ with $\|b\|\leq \pi$. By Kaplansky's density theorem, we can find a net of self-adjoint elements $h_{\tau}\in\A$ with norm smaller than $\pi$ which converge to $b$ in the SOT. Since the exponential function is SOT-continuous (see \cite[Lemma I.7.2]{davidson}, for example), it follows that $e^{ih_{\tau}} \to e^{ib}$ in SOT, and therefore one could replace the unitaries by exponentials as in the statement.  
\end{proof}

We now show how the previous result gives a second proof of Theorem \ref{ConvexCom} under the stronger assumption that $\A^{**}$ is properly infinite (e.g. if $\A$ is stable or simple and purely infinite). To see this, note that if $F$ is a finite dimensional $C^*$-algebra then any completely positive contraction $\Phi:F \to \A \subseteq \A^{**}$ is in the point-weak$^*$ closure of the convex hull of the $^*$-homomorphisms from $F$ to $\A^{**}$ by Theorem \ref{exact-embedding}. Using Lemma \ref{lemma:weak-stability}, it follows that $\Phi$ is in the  point-weak closure of the convex hull of the \emph{order zero maps} from $F$ to $\A$. As before, it follows from the Hahn-Banach theorem that in fact $\Phi$ is in the point-\emph{norm} closure of convex combinations of order zero maps from $F$ to $\A$. 

Since $\A$ is nuclear, we can find a net of CP approximations  $\Psi_{\lambda}:\A \to F_{\lambda}$, $\Phi_{\lambda}:F_{\lambda} \to \A$ to the identity. By the argument above, we may assume without loss of generality that $\Phi_{\lambda}$ is in fact a convex combination of order zero maps, giving us the required result.

\begin{Rmk}\label{rem}
The fact that separable exact algebras embed in $\Oh_2$ can be used to give another characterization of exactness as follows.  A $C^*$-algebra $\B$ is exact if and only every weakly nuclear CP contraction $\Phi$ from $\B$ into a von Neumann algebra $\M$ is in fact nuclear.  
\end{Rmk}
\begin{proof}
If $\B$ satisfies the hypothesis of the remark, then take $\Phi$ to be any faithful embedding of $\B$ into $\B(H)$ to see that $\B$ is nuclearly embeddable and therefore exact (see \cite{Brown-Ozawa}).  Conversely, let $\B$ be exact and take a weakly nuclear CP contraction $\Phi:\B\rightarrow \M$.  Fix a finite dimensional operator system $E\subset \B$ and consider the separable $C^*$-algebra $\B_0$ generated by $E$ which is also exact.  Thus we can embed $\B_0$ in $\Oh_2$.   As in the first part of the proof of Theorem \ref{exact-embedding}, $\Phi|_E$ can be extended to $\Oh_2$ and so is nuclear as $\Oh_2$ is nuclear.  Since $E$ was arbitrary, we can approximate $\Phi$ in the point-norm topology.
\end{proof}

Theorem \ref{exact-embedding}, requires $\M$ to be properly infinite.  It is natural to ask what happens when $\M$ is a finite von Neumann algebra. 
\begin{Question}
Let $\M$ be type II$_1$ factor, let $F$ be a finite dimensional $C^*$-algebra and let $\Phi:F \to \M$ be a completely positive contraction. Can $\Phi$ be approximated in the point-weak topology by convex combinations of order zero maps?
\end{Question}
One could ask the question about finite von-Neumann algebras which are not factors and do not have a type I part. One should note, though, that if $\M$ is finite dimensional then one would not necessarily be able to find such approximations for completely positive maps, due to dimension restrictions. For example, if $n>m$ there are many CP maps from $M_n$ to $M_m$ but no non-zero order zero maps. This question is also interesting when we restrict to trace preserving maps. Note that different factorization properties of trace preserving contractive CP maps between finite von Neumann algebras have been considered in the literature and can be somewhat delicate, see \cite{haag}, for example.

\newcommand{\etalchar}[1]{$^{#1}$}

\end{document}